\documentclass[11pt]{amsart}
\usepackage[a4paper,margin=1.0in]{geometry}
\usepackage[utf8]{inputenc}
\usepackage[T1]{fontenc}
\usepackage{hyperref}
\usepackage{url}
\usepackage{booktabs}
\usepackage{amsfonts}
\usepackage{nicefrac}
\usepackage{microtype}
\usepackage{xcolor}
\usepackage{amsmath,amssymb,amsthm,mathtools,bm}
\usepackage{mathrsfs}
\usepackage{graphicx}
\usepackage{subcaption}
\usepackage{algorithm}
\usepackage{algpseudocode}
\usepackage[nameinlink,capitalize]{cleveref}

\newtheorem{theorem}{Theorem}
\newtheorem{proposition}{Proposition}
\newtheorem{remark}{Remark}

\theoremstyle{definition}

\newcommand{\tr}{\operatorname{tr}}
\newcommand{\volg}{\operatorname{vol}_g}
\newcommand{\Exp}{\operatorname{Exp}}
\newcommand{\Log}{\operatorname{Log}}

\newcommand{\R}{\mathbb R}

\newcommand \K{{\mathcal K}}

\newcommand{\Sym}{\mathrm{Sym}}
\newcommand{\Spp}{\mathcal S_{++}}
\newcommand{\Sp}{\mathcal S_{+}}
\renewcommand{\S}{\mathcal{S}}

\newcommand{\inter}{\operatorname{int}}

\newcommand{\ConeMALA}{\textnormal{\textsc{ConeMALA}}}

\DeclareMathOperator{\vol}{vol}     % Riemannian volume measure / volume form
\title{Geometry-Aware Langevin Sampling for Matrix-Valued Graph Learning}
\author[P. Dey]{Papri Dey}

\address{Applied Mathematics, Baskin School of Engineering,
University of California, Santa Cruz}

\email{pdey@ucsc.edu}
\date{}

\subjclass[2020]{
  62F15,  % Bayesian inference
  65C05,  % Monte Carlo methods
  65C40,  % Computational Markov chains
  60J60,  % Diffusion processes
  53B21,  % Methods of Riemannian geometry
  90C22,  % Semidefinite programming
  15B48   % Positive matrices and their generalizations
}
\keywords{Metropolis-adjusted Langevin algorithm, positive semidefinite cone, log-determinant geometry, affine-invariant metric, graph Laplacian, Riemannian MCMC, effective sample size, covariance estimation}
\begin{document}

\begin{abstract}
Bayesian inference over positive semidefinite (PSD) matrix-valued parameters
arises in structured covariance estimation, graph-Laplacian precision models,
and multi-output graph learning, but Euclidean proposals often mix poorly near
the cone boundary. We propose \ConeMALA{}, a geometry-aware
Metropolis-adjusted Langevin algorithm whose proposal geometry is induced by
the model's log-determinant structure. For a PSD-weighted graph with edge
kernels $W_e\succeq 0$, block Laplacian $L(W)$ , and stabilizer
$R\succ 0$, the lifted precision matrix
$X(W)=L(W)+R\in \mathbb S_{++}^{md}$
defines the log-determinant energy
$\Phi(W)=-\log\det X(W).$
We show that the Hessian of $\Phi$ is the pullback of the affine-invariant
SPD metric under the map $W\mapsto X(W)$, yielding explicit intrinsic
Langevin proposals with Metropolis-Hastings correction using the
closed-form SPD exponential-map Jacobian. We validate the metric on
rank-one PSD edge perturbations for $d=5$, obtaining essentially exact
agreement between analytic curvature scores and finite-difference
curvatures. In intrinsic SPD posterior and matrix-valued graph Gaussian
experiments, \ConeMALA{} achieves stable multichain diagnostics and
substantially higher ESS/sec than Euclidean MALA and generic RMALA, while a
PDHMC-like finite-difference baseline is accurate but computationally
prohibitive at larger graph sizes. These results show that pullback
log-determinant geometry provides a practical route to uncertainty
quantification in PSD-constrained graph learning.
\end{abstract}
\maketitle

\section{Introduction}
Sampling on symmetric positive definite (SPD) cones arises naturally in covariance and precision estimation, Gaussian graphical models, graph-Laplacian precision models, and matrix-valued graph learning \cite{Lauritzen1996,Egilmez2017,SimpsonLindgrenRue2012,ZhiNgDong2020}.
In such problems, Euclidean proposals often interact poorly with the geometry of the cone, especially near the boundary where curvature and conditioning become significant. This motivates proposals adapted to the intrinsic geometry of the state space.

Let $\Sym^d$
denote the vector space of real symmetric \(d\times d\) matrices,
$\S_+^d:= \{A\in\Sym^d: A\succeq 0\}$ and $\S_{++}^d := \{A\in\Sym^d: A\succ 0\}$ denote the cone of symmetric PSD matrices and the cone of symmetric positive definite (PD) matrices respectively. We study a determinantal class of PSD-weighted graph models. Given an undirected graph \(G=(V,E)\), each edge carries a weight \(W_e\in  \S_+^d\), so the parameter space is the product PSD cone
\[
\K=(\mathcal S_+^d)^E.
\]
In matrix-valued graph learning, the unknown edge parameters \(W_e\) encode structured couplings between multiple outputs or node-level features. Learning these matrices from graph signals requires both a statistical model and a sampling method that respects PSD constraints. Our goal is therefore not only to estimate \(W\), but to quantify posterior uncertainty in edge interactions, predictive covariance, and derived graph functionals.
Let \(L(W)\) be the block graph Laplacian and let \(R\in \mathcal S_{++}^{md}\) be fixed. We then consider the lifted SPD matrix
\[
X(W)=L(W)+R\in \mathcal S_{++}^{md}
\]
and the log-determinant energy
\[
\Phi(W)=-\log\det(L(W)+R).
\]
The Hessian of $\Phi$ is explicit and coincides with the pullback of the affine-invariant metric on the lifted SPD space. This geometry serves two roles: it quantifies local perturbation sensitivity and it defines geometry-aware Langevin proposals.

Geometry-aware MCMC on curved spaces has a substantial literature.
Riemann manifold Langevin and Hamiltonian Monte Carlo methods
\cite{GirolamiCalderhead2011,XifaraSherlockLivingstoneByrneGirolami2014}
use a position-dependent metric to precondition proposals, and geodesic
Lagrangian and Hamiltonian methods on the SPD manifold
\cite{HolbrookLanVandenbergRodesShahbaba2018} provide a natural
baseline for positive-definite parameters. The present paper takes a
complementary approach: rather than imposing a generic Riemannian
structure on the problem, we start from the determinantal energy of the graph model itself, whose Hessian induces an explicit geometry that can be numerically validated before being used in sampling.  This makes the link between model structure, local curvature, and proposal design explicit and computable.

Our contributions are threefold. First, we derive closed-form first- and second-order directional derivative formulas for $\Phi$ and identify the induced pullback log-determinant metric. Second, in low PSD dimension, we show numerically that this metric accurately captures local perturbation sensitivity. Third, we use the same geometry to construct an explicit affine-invariant Metropolis-adjusted Langevin algorithm (MALA) on $\Spp^d$ and compare it against
Euclidean MALA, generic Riemannian MALA (RMALA), and positive-definite Hamiltonian Monte Carlo (PDHMC) baselines under a common intrinsic target law.

\section{Determinantal PSD-weighted graph models and log-det metric}
\label{sec:psd-graph}
Let \(G=(V,E)\) be an undirected graph with \(|V|=m\), and fix \(d\ge 1\).
Set \(N:=md\). Assign to each edge \(e\in E\) a matrix weight
\(W_e\in \S_+^d\), and define
\[
\mathcal K := (\S_+^d)^E .
\]
Fix an arbitrary orientation of \(E\), let
\(B\in \mathbb R^{m\times |E|}\) be the corresponding oriented incidence matrix,
and define the block Laplacian
\begin{equation}
L(W)
=
(B\otimes I_d)
\left(\bigoplus_{e\in E} W_e\right)
(B^\top\otimes I_d).
\label{eq:blocklap}
\end{equation}
This definition is independent of the chosen orientation and satisfies
\[
L(W)\succeq 0
\qquad\text{for all } W\in\mathcal K .
\]

Fix \(R\in \S_{++}^{N}\) and set
\begin{equation}
X(W):=L(W)+R,
\qquad
\Phi(W):=-\log\det X(W).
\label{eq:phiW}
\end{equation}
Since  \(L(W)\succeq 0\) and \(R\succ 0\), we have
\[
X(W)\in  \S_{++}^{N}
\qquad\text{for all } W\in\mathcal K .
\]

Let $\mathcal E := (\Sym^d)^E$
denote the ambient vector space of edge-weight perturbations. Since the map
\(W\mapsto X(W)\) is affine, its differential at \(W\) in the direction
\(U\in\mathcal E\) is
\begin{equation}
DX_W[U]=L(U).
\label{eq:differentialX}
\end{equation}
For any perturbation directions \(U,V\in\mathcal E\), Jacobi's identity gives
\begin{align}
D_U\Phi(W)
&=
-\operatorname{tr}\!\left(X(W)^{-1}L(U)\right),
\label{eq:firstdir}
\\
D_U D_V\Phi(W)
&=
\operatorname{tr}\!\left(
X(W)^{-1}L(U)X(W)^{-1}L(V)
\right).
\label{eq:seconddir}
\end{align}

\paragraph{Rayleigh-type positivity in cone directions.}
Let
\[
f(W):=\det X(W), \qquad X(W)=L(W)+R .
\]
For cone directions \(U,V\in K=(\mathbb S_+^d)^E\), Jacobi's formula and
the identity \(D(X^{-1})[V]=-X^{-1}L(V)X^{-1}\) give
\[
D_U f(W)=f(W)\operatorname{tr}\!\big(X(W)^{-1}L(U)\big),
\]
and
\[
D_UD_V f(W)
=
f(W)\Big[
\operatorname{tr}\!\big(X^{-1}L(U)\big)
\operatorname{tr}\!\big(X^{-1}L(V)\big)
-
\operatorname{tr}\!\big(X^{-1}L(U)X^{-1}L(V)\big)
\Big].
\]
Hence
\[
\big(D_U f(W)\big)\big(D_V f(W)\big)-f(W)D_UD_V f(W)
=
f(W)^2\operatorname{tr}\!\big(X^{-1}L(U)X^{-1}L(V)\big).
\]
Since \(U,V\in K\) imply \(L(U),L(V)\succeq 0\), and since \(X(W)\succ0\),
we have
\[
\operatorname{tr}\!\big(X^{-1}L(U)X^{-1}L(V)\big)
=
\operatorname{tr}\!\Big[
\big(X^{-1/2}L(U)X^{-1/2}\big)
\big(X^{-1/2}L(V)X^{-1/2}\big)
\Big]\ge 0.
\]
Therefore
\[
\boxed{
\big(D_U f(W)\big)\big(D_V f(W)\big)-f(W)D_UD_V f(W)\ge 0
}
\]
for all \(U,V\in K\). Thus the determinantal model satisfies a
continuous Rayleigh-type inequality along PSD cone directions. Equivalently,
\[
\big(D_U f\big)\big(D_V f\big)-fD_UD_Vf
=
f^2D_UD_V\Phi,
\qquad \Phi(W)=-\log f(W),
\]
so the same inequality is exactly the nonnegativity of the pullback
log-determinant Hessian in cone directions. In particular,
\[
D_U^2\Phi(W)
=
\big\|X(W)^{-1/2}L(U)X(W)^{-1/2}\big\|_F^2\ge 0.
\]
Hence \(\Phi(W)=-\log\det X(W)\) is convex on \(\K\). Equivalently, $\log\det X(W)$ is concave on \(\mathcal K\), and therefore the positive polynomial $\det X(W)$ is log-concave on \(\K\).

The affine-invariant Riemannian metric on \(\S_{++}^{N}\) (cf. 
\cite{PennecFillardAyache2006,AbsilMahonySepulchre2008,Bhatia2007,Moakher2005,ThanwerdasPennec2023}) is
\begin{equation}
    \label{eq:affine_inv_metric}
g_X(A,B)
=
\operatorname{tr}(X^{-1}AX^{-1}B),
\qquad
A,B\in T_X\S_{++}^{N}\cong \mathbb \Sym^{N}.
\end{equation}
Pulling this metric back by the lifted matrix map
\[
X:\mathcal K\to \S_{++}^{N},
\qquad
X(W)=L(W)+R,
\]
gives the bilinear form
\begin{equation}
g_W(U,V)
=
g_{X(W)}(DX_W[U],DX_W[V])
=
\operatorname{tr}\!\left(
X(W)^{-1}L(U)X(W)^{-1}L(V)
\right).
\label{eq:pullbackmetric}
\end{equation}
Thus the Hessian of the log-determinant barrier on the lifted SPD space induces
a computable local geometry on the PSD edge-weight cone. More precisely, since $\phi(X):=-\log\det X$
satisfies
\[
D^2\phi_X[A,B]
=
\operatorname{tr}(X^{-1}AX^{-1}B),
\]
the bilinear form \(g_W\) is exactly the pullback Hessian metric generated by
the log-determinant barrier.
\begin{remark}[Affine inheritance of self-concordance]
\label{rem:psd-graphs-selfconcordant}
The function
\[
X\mapsto -\log\det X
\]
is self-concordant on $\S_{++}^{md}$; see \cite{NN94,NemirovskiTodd2009}. Since
\[
W\mapsto X(W)=L(W)+R
\]
is affine, the function
\[
\Phi(W):=-\log\det(L(W)+R)
\]
is self-concordant on every open convex set on which $L(W)+R\in \S_{++}^{md}$. In particular, if
$L(W)+R\in \S_{++}^{md}$ for all $W\in\inter(\K)$, then $\Phi$ is self-concordant on $\inter(\K)$ where $\inter(\K)$ denotes the interior of $\K$.
\end{remark}

Thus the Hessian metric $g_W$
has the standard controlled local variation in the corresponding self-concordant local norm, which is useful for stable Newton-type steps (cf. \cite[Thm 2.2.2]{NN94}, \cite{NemirovskiTodd2009}) and geometry-aware Langevin/MALA discretizations based on $g_W$.
%If the map \(U\mapsto L(U)\) is injective on the tangent space under consideration, then \(g_W\) is a genuine Riemannian metric. Otherwise, \(g_W\) is positive semidefinite and should be interpreted as a pullback seminorm or a degenerate metric.

\section{Geometry validation by PSD perturbations}
\label{sec:geometry}

We validate the pullback log-determinant metric in a matrix-valued PSD setting
with \(d=5\). For a rank-one edge perturbation \(U\in(\Sym^5)^E\), the lifted
perturbation is
\[
\Delta_U:=L(U),
\]
and the exact local curvature of \(\phi(X)=-\log\det X\) along \(\Delta_U\) is
\[
s(U)
:=
D^2_{\Delta_U}\phi(X)
=
\operatorname{tr}\!\left(X^{-1}\Delta_U X^{-1}\Delta_U\right)
=
g_X(\Delta_U,\Delta_U).
\]
We compare this analytic metric score with the centered finite-difference proxy
\[
\delta_{\mathrm{FD}}(U)
=
\frac{\phi(X+\varepsilon\Delta_U)-2\phi(X)+\phi(X-\varepsilon\Delta_U)}
{\varepsilon^2}.
\]

The perturbations are rank-one PSD edge directions of the form
\[
U^{(e,u)}_e=uu^\top,\qquad
U^{(e,u)}_{e'}=0\quad(e'\neq e),
\qquad u\in\mathbb R^5.
\]
These directions provide a natural anisotropic test family, since each
perturbation modifies one edge weight only along the direction \(u\). This
validation step is important for the sampling construction below: the same
quadratic form \(s(U)\) is later used to define local geometry for
geometry-aware Langevin proposals on the SPD cone.

To assess whether \(s(U)\) is also useful as a ranking statistic, let
\(\{U_i\}_{i=1}^M\) be the sampled perturbation directions and write
\(\Delta_{U_i}=L(U_i)\). We define the captured sensitivity mass
\[
\operatorname{Cap}(k)
:=
\frac{
\sum_{i\in \operatorname{Top}\text{-}k(s)}
s(U_i)
}{
\sum_{i=1}^M s(U_i)
},
\qquad k=1,\dots,M,
\]
where \(\operatorname{Top}\text{-}k(s)\) denotes the indices of the \(k\)
largest values among \(\{s(U_i)\}_{i=1}^M\). Thus \(\operatorname{Cap}(k)\)
measures the fraction of total predicted sensitivity recovered by the top-\(k\)
directions under the metric ranking.

On a five-node cycle graph, with \(M=3000\) sampled rank-one PSD directions, the
Pearson correlation between $\log s(U_i)
\quad\text{and}\quad
\log \delta_{\mathrm{FD}}(U_i)$ 
was \(1.000000\). The median relative error was \(4.40\times 10^{-6}\), and
the \(99\%\) relative error was \(2.03\times 10^{-5}\). These results confirm
that the pullback log-determinant metric accurately captures the local
curvature of the lifted determinant model.

\begin{figure}[H]
\centering
\includegraphics[width=\linewidth]{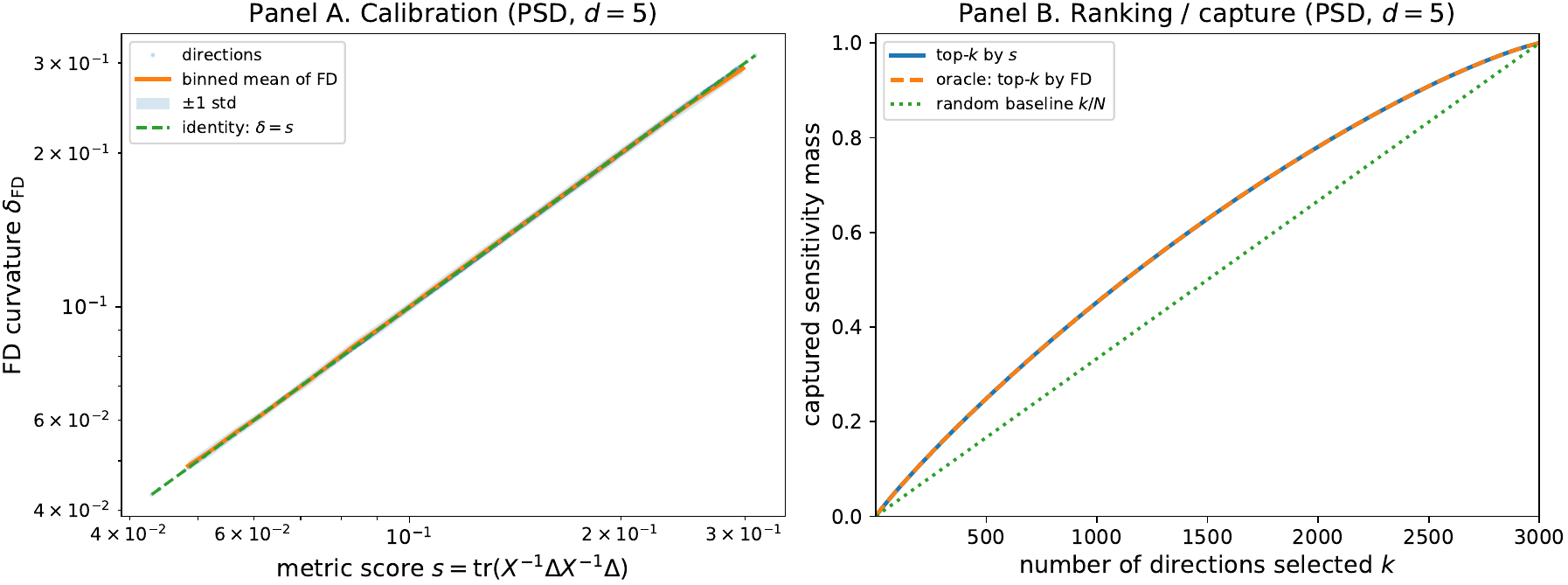}
\caption{Sensitivity validation of the pullback log-determinant geometry for
PSD edge kernels of size \(d=5\). Panel A compares the analytic metric score
\(s(U)\) with the centered finite-difference curvature
\(\delta_{\mathrm{FD}}(U)\). % over \(M=3000\) rank-one PSD edge perturbations on a five-node cycle graph. 
Panel B shows that ranking directions by \(s(U)\)
recovers nearly the same sensitivity mass as the oracle ranking by
\(\delta_{\mathrm{FD}}(U)\), and substantially outperforms the random baseline.}
\label{fig:metricval}
\end{figure}

\Cref{fig:metricval} shows that the pullback metric identifies the locally
important PSD perturbation directions, supporting its use as a practical local
geometry for sampling.

\section{Intrinsic Gibbs laws and affine-invariant MALA}
Let \(M\) denote either the smooth cone interior \(\inter (\K)\)
or a smooth cone-based submanifold, such as an affine slice or a cone base.
Let
\[
f:M\to(0,\infty)
\]
be a \(C^3\) function and define the cone-induced potential
\begin{equation}
\label{eq:fw-phi}
\phi(x):=-\log f(x),
\qquad
G(x):=\nabla^2\phi(x),
\end{equation}
where \(G(x)\) denotes the matrix representation of the Hessian in local
Euclidean coordinates. Assume that \(G(x)\) is positive definite on the tangent
space \(T_xM\) for every \(x\in M\). Then
\begin{equation}
\label{eq:hessianmetric}
g_x(u,v):=\langle u,G(x)v\rangle,
\qquad u,v\in T_xM,
\end{equation}
defines a Riemannian metric on \(M\). Equivalently,
\[
g_x(u,v)=D^2\phi(x)[u,v].
\]
For example, if \(M\) is an open convex subset of a Euclidean space and
\(\log f\) is strictly concave, then \(\phi=-\log f\) is strictly convex and
\(G(x)\succ0\). In the determinantal PSD graph model of
\Cref{sec:psd-graph}, this construction corresponds to
\[
f(W)=\det X(W),
\qquad
\phi(W)=-\log\det X(W),
\]
and gives the pullback log-determinant metric.

In local coordinates, the associated Riemannian volume form is
\[
\vol_g(dx)=\sqrt{\det G(x)}\,dx.
\]
Given a smooth sampling potential \(\Phi:M\to\mathbb R\), we define the
intrinsic Gibbs law
\begin{equation}
\label{eq:intrinsicgibbs}
\pi(dx)=Z^{-1}e^{-\Phi(x)}\,\operatorname{vol}_g(dx),
\qquad
Z=\int_M e^{-\Phi(x)}\,\operatorname{vol}_g(dx).
\end{equation}
The overdamped Langevin diffusion on the Riemannian manifold \((M,g)\) with
invariant law \(\pi\) has generator
\[
\mathcal L h
=
\Delta_g h-\langle \nabla_g\Phi,\nabla_g h\rangle_g,
\]
where \(\nabla_g\) is the Riemannian gradient and \(\Delta_g\) is the
Laplace-Beltrami operator. Equivalently, in intrinsic notation,
\begin{equation}
\label{eq:riemlangevin}
dX_t
=
-\nabla_g\Phi(X_t)\,dt+\sqrt{2}\,dW_t^{(g)},
\end{equation}
where \(W_t^{(g)}\) denotes Brownian motion on \((M,g)\). Thus the metric
determines both the reference volume in \eqref{eq:intrinsicgibbs} and the local
geometry of the Langevin dynamics.

We now specialize to $M=\mathbb S_{++}^d$
equipped with the affine-invariant metric mentioned in \eqref{eq:affine_inv_metric}
We have shown that this is exactly the Hessian metric of the log-determinant barrier in \Cref{sec:psd-graph}. 
For a smooth potential \(\Phi:\mathbb S_{++}^d\to\mathbb R\), let
\(\nabla\Phi(X)\in\Sym^d\) denote the Euclidean gradient, characterized by
\[
D\Phi(X)[U]=\operatorname{tr}(\nabla\Phi(X)\,U).
\]
The Riemannian gradient is the unique tangent vector \(\nabla_g\Phi(X)\)
satisfying
\[
g_X(\nabla_g\Phi(X),U)=D\Phi(X)[U]
\qquad\text{for all }U\in\Sym^d.
\]
Substituting the affine-invariant metric gives
\[
\tr\!\bigl(X^{-1}\nabla_g\Phi(X)\,X^{-1}U\bigr)
=
\tr\!\bigl(\nabla\Phi(X)\,U\bigr)
\qquad
\text{for all }U,
\]
and hence
\[
\nabla_g\Phi(X)=X(\nabla\Phi(X))X.
\]

It is convenient to express tangent vectors using congruence coordinates:
\[
S=X^{-1/2}UX^{-1/2},
\qquad
U=X^{1/2}SX^{1/2}.
\]
If
\[
S_U=X^{-1/2}UX^{-1/2},
\qquad
S_V=X^{-1/2}VX^{-1/2},
\]
then
\[
g_X(U,V)=\operatorname{tr}(S_U S_V).
\]
Thus Gaussian increments with identity covariance in the \(S\)-coordinates are
isotropic with respect to the affine-invariant geometry.

A first-order Langevin proposal in these tangent coordinates uses the drift
\[
M_X:=-h\,X^{-1/2}\nabla_g\Phi(X)X^{-1/2}
=
-h\,X^{1/2}(\nabla\Phi(X))X^{1/2},
\]
draws
\[
S=M_X+\sqrt{2h}\,Z,
\qquad
Z\sim N(0,I)\ \text{on }\Sym^d,
\]
and maps back to the manifold by the affine-invariant exponential map
\begin{equation}
\label{eq:spdproposal}
Y=\operatorname{Exp}_X(X^{1/2}SX^{1/2})
=
X^{1/2}\exp(S)X^{1/2}.
\end{equation}
Because the target \(\pi\) is defined with respect to the Riemannian volume
\(\operatorname{vol}_g\), the Metropolis-Hastings proposal density must also be
evaluated with respect to \(\operatorname{vol}_g\). This introduces the
exponential-map Jacobian correction. In the affine-invariant SPD case, this
Jacobian has a closed form in terms of the eigenvalues of \(S\), making the intrinsic MALA acceptance ratio explicit, see \Cref{thm:exp-jacobian-volg} and \Cref{prop:jacobian_closed_form}.

% The choice of metric is not arbitrary. In the PSD graph model,
% \[
% g_W(U,V)=D^2\Phi(W)[U,V]
% =
% \operatorname{tr}\!\left(
% X(W)^{-1}L(U)X(W)^{-1}L(V)
% \right),
% \]
% so \(g_W\) is precisely the local quadratic model of the log-determinant energy. 
We call a proposal \emph{isotropic} when its Gaussian noise has identity
covariance in the chosen metric coordinates, and \emph{anisotropic} when its
scaling depends on direction in the ambient Euclidean coordinates. Thus,
Gaussian increments that are isotropic in the pullback metric automatically
adapt to the local anisotropy of the target. This preconditions Langevin
proposals by the log-determinant curvature, which is especially important near
the cone boundary where Euclidean proposals can be poorly scaled. The choice of metric is not arbitrary. The same
geometry also has a global interpretation: Poincar\'e or logarithmic Sobolev
inequalities for the intrinsic Gibbs law imply variance control, entropy decay,
and quantitative convergence of the corresponding Langevin diffusion. Hence the
pullback log-determinant metric is motivated both locally by curvature
adaptation and globally by the mixing and concentration properties of the
intrinsic target. 
%Connections with \(\mathcal K\)-Lorentzian and cone-completely log-concave polynomials, self-concordance, and functional inequalities are given in the extended version.

This construction yields an explicit affine-invariant Metropolis-adjusted
Langevin algorithm on \(\S_{++}^d\): a drifted Gaussian proposal is generated in
metric-isotropic tangent coordinates and then corrected by the standard
Metropolis-Hastings acceptance rule to target
\(\pi(dX)\propto e^{-\Phi(X)}\operatorname{vol}_g(dX)\). The resulting
geometry-aware MALA is summarized in \Cref{alg:mmala-manifold}. The blockwise spectral implementation used for the product cone \((\S_{++}^d)^E\) is given in
\Cref{alg:fast-cone-mala}.

\begin{algorithm}[!htbp]
%\caption{\ConeMALA: spectral affine-invariant MALA on \((\mathcal S_{++}^d)^E\)}
\caption{\ConeMALA{}: spectral affine-invariant MALA for PSD edge-kernel posterior sampling}
\label{alg:fast-cone-mala}
\small
\begin{algorithmic}[1]
\Require Initial state \(W^{(0)}=(W_e^{(0)})_{e\in E}\in(\mathcal S_{++}^d)^E\), posterior potential \(\Phi(W)\), step size \(h>0\), number of iterations \(N\).
\Ensure Posterior samples \(\{W^{(k)}\}_{k=0}^N\) targeting
\(\pi(dW\mid Y)\propto e^{-\Phi(W)}\,\mathrm{vol}_g(dW)\).
\For{\(k=0,1,\dots,N-1\)}
    \State Set \(W\gets W^{(k)}\) and compute block gradients
    \[
    G_e(W):=\nabla_{W_e}\Phi(W),\qquad e\in E.
    \]

    \For{each edge \(e\in E\)}
        \State Compute \(W_e^{1/2}\) spectrally from
        \[
        W_e=U_e\Lambda_eU_e^\top,\qquad
        W_e^{1/2}=U_e\Lambda_e^{1/2}U_e^\top.
        \]
        \State Form the congruence-coordinate drift
        \[
        M_e=-h\,W_e^{1/2}G_e(W)W_e^{1/2}.
        \]
        \State Draw \(Z_e\sim \mathcal N(0,I)\) on \(\mathcal S^d\) and set
        \[
        S_e=M_e+\sqrt{2h}\,Z_e.
        \]
        \State Compute \(\exp(S_e)\) spectrally and propose
        \[
        \widetilde W_e=W_e^{1/2}\exp(S_e)W_e^{1/2}.
        \]
    \EndFor

    \State Set \(\widetilde W=(\widetilde W_e)_{e\in E}\) and compute reverse gradients
    \[
    \widetilde G_e:=\nabla_{W_e}\Phi(\widetilde W),\qquad e\in E.
    \]
    \State Initialize \(\log q(W\to \widetilde W)=0\) and \(\log q(\widetilde W\to W)=0\).

    \For{each edge \(e\in E\)}
        % \State Compute the reverse logarithmic increment
        % \[
        % A_e=\widetilde W_e^{-1/2}W_e\widetilde W_e^{-1/2},\qquad
        % T_e=\log(A_e),
        % \]
        % with all matrix square roots and logarithms evaluated spectrally.
        \State Compute the reverse logarithmic increment in congruence coordinates:
        \[
         A_e=\widetilde W_e^{-1/2}W_e\,\widetilde W_e^{-1/2}\in\mathcal S_{++}^d,
         \qquad
         A_e=R_e\Gamma_eR_e^\top,
         \qquad
         T_e=\log(A_e)=R_e\log(\Gamma_e)R_e^\top,
        \]
where \(R_e\) is the orthogonal eigenvector matrix of \(A_e\) and \(\Gamma_e\) is the diagonal matrix of its positive eigenvalues.
        \State Form the reverse drift
        \[
        \widetilde M_e=-h\,\widetilde W_e^{1/2}\widetilde G_e\,\widetilde W_e^{1/2}.
        \]
        \State Compute Jacobian factors
        \[
        j(S_e)=\prod_{i<j}\frac{\sinh((s_i-s_j)/2)}{(s_i-s_j)/2},
        \qquad
        j(T_e)=\prod_{i<j}\frac{\sinh((t_i-t_j)/2)}{(t_i-t_j)/2},
        \]
        where \(\{s_i\}\) and \(\{t_i\}\) are the eigenvalues of \(S_e\) and \(T_e\).
        \State Accumulate
        \[
        \log q(W\to \widetilde W)\mathrel{+}=
        \log\varphi(S_e;M_e,2hI)-\log j(S_e),
        \]
        \[
        \log q(\widetilde W\to W)\mathrel{+}=
        \log\varphi(T_e;\widetilde M_e,2hI)-\log j(T_e).
        \]
    \EndFor

    \State Accept \(\widetilde W\) with probability
    \[
    \alpha(W,\widetilde W)=
    \min\left\{
    1,\,
    \frac{e^{-\Phi(\widetilde W)}q(\widetilde W\to W)}
         {e^{-\Phi(W)}q(W\to \widetilde W)}
    \right\}.
    \]
    \If{accepted}
        \State \(W^{(k+1)}\gets \widetilde W\).
    \Else
        \State \(W^{(k+1)}\gets W\).
    \EndIf
\EndFor
\end{algorithmic}
\end{algorithm}
\normalsize
In the synthetic graph-signal posterior experiment, the advantage of the
geometry-aware samplers becomes more pronounced as the graph size increases.
For \Cref{alg:fast-cone-mala}, the per-iteration computation consists of
edge-wise cone-geometry updates together with a global posterior evaluation.
The edge-wise step requires spectral decompositions of the \(d\times d\) blocks
\(W_e\), costing \(O(|E|d^3)\). The global step evaluates the energy and
gradient through \(X(W)=L(W)+R\in\S_{++}^{md}\); in a dense implementation this
costs \(O((md)^3)\) due to Cholesky factorization or linear solves with
\(X(W)\). For a dense implementation, the per-iteration cost is
$O(|E|d^3) + O((md)^3)
=
O(|E|d^3 + m^3d^3)$. The number of sampled scalar parameters is
$E\dim(\mathbb S^d)=E\frac{d(d+1)}{2},$
which equals \(15m\) for \(d=5\) on a cycle graph. However, each posterior evaluation involves the lifted precision matrix $X(W)$,
so dense linear algebra scales with the lifted dimension \(md\). Thus the computational cost is governed primarily by operations on the \(md\times md\) lifted SPD matrix, while the MCMC mixing and finite-difference baselines are also affected by the ambient product-cone dimension. For sparse graphs, this term can be reduced using sparse factorizations or iterative solvers. 
%Hence the sampler exploits the product-cone structure across edges while retaining the same intrinsic affine-invariant proposal geometry.

\section{Experiments}
\label{sec:experiments}
\paragraph{Experiment 1: Intrinsic SPD covariance posterior.}
We compare four samplers under a common intrinsic target law on $\Spp^d$: cone-geometry MALA (\ConeMALA), Euclidean MALA, generic RMALA, and PDHMC. The observables are $\log\det(X),\quad \lambda_{\min}(X),\quad d_g(X,X_0)^2,\quad \tr(X),\quad \Phi(X)$.
These measure, respectively, volume, boundary proximity, geometric displacement, scale, and total energy.

Among the four methods, cone-induced MALA is the strongest overall performer in this run, achieving the highest effective sample size per sec (ESS/sec) on every reported observable while maintaining split-\(\widehat R\) values essentially equal to one.
\begin{table}[H]
\centering
\footnotesize
\caption{Main efficiency diagnostics for Experiment~1 with \(d=5\). Higher is better for ESS/sec. ConeMALA \ achieves the largest ESS/sec.} 
\label{tab:exp1_summary_csv}
\begin{tabular}{lcccccccc}
\toprule
Method & Runtime & Acc. & \(\widehat R_{\log\det}\) & ESS/s \(\log\det\) & \(\widehat R_{\lambda_{\min}}\) & ESS/s \(\lambda_{\min}\) & \(\widehat R_{\Phi}\) & ESS/s \(\Phi\) \\
\midrule
coneMALA & 33.930 & 0.701 & 1.0000 & 51.94 & 1.0005 & 39.15 & 1.0002 & 45.82 \\
Euclidean MALA   & 32.837 & 0.981 & 1.0008 & 20.98 & 1.0003 & 17.10 & 1.0069 & 9.66 \\
generic RMALA    & 52.183 & 0.350 & 0.9994 & 17.49 & 0.9999 & 10.24 & 1.0019 & 14.43 \\
PDHMC-like       & 66.666 & 0.908 & 1.0013 & 29.32 & 1.0005 & 27.66 & 0.9998 & 27.90 \\
\bottomrule
\end{tabular}
\end{table}
\Cref{tab:exp1_summary_csv} shows that ConeMALA \ gives the strongest
accuracy-runtime tradeoff in Experiment~1. All methods have split-\(\widehat R\)
close to one, indicating stable multi-chain diagnostics, but ConeMALA \ attains
the highest ESS/sec across every reported observable. In contrast, Euclidean
MALA has very high acceptance but much lower ESS/sec, illustrating that accepted
moves are not necessarily effective moves. Generic RMALA and PDHMC-like also mix reliably in terms of \(\widehat R\), but both are less efficient than ConeMALA \ under the same target law. In fact, the PDHMC-like baseline improves over Euclidean MALA for some observables, but its larger runtime prevents it from matching the efficiency of the ConeMALA. 

Additional cross-method empirical cumulative distribution functions (ECDFs) for two representative observables from Experiment~1 are provided in the appendix.

\paragraph{Experiment 2: Bayesian matrix-valued graph Gaussian model.}
We next consider posterior inference for PSD-valued edge kernels in a multi-output graph Gaussian model. Let
\[
X(W)=L(W)+R,\qquad Y\mid W\sim \mathcal N(0,X(W)^{-1}),
\]
with prior support $W\in(\Sp^d)^E$. The posterior has the form
\[
\pi(W\mid Y)\propto p(Y\mid W)p(W),
\]
and inherits the same log-determinant geometry as the determinantal model. We compare a fast spectral implementation of the cone-induced proposal with the same baselines. 

Here \(Y\) denotes multi-output graph signals generated under a Gaussian precision model with matrix-valued edge couplings. The posterior over \(W\) quantifies uncertainty in the learned matrix-valued edge interactions and the induced precision matrix \(X(W)\).
The log-determinant observable is especially relevant because it captures posterior volume and is tightly coupled to conditioning and predictive uncertainty. The held-out negative log-likelihood (NLL) evaluates predictive performance:
lower NLL indicates that the posterior samples assign higher probability to
unseen graph signals. %Thus, while ESS/sec measures sampling efficiency, NLL measures the statistical quality of the inferred PSD edge-weight model. 
Thus, the goal is not only point estimation of PSD edge weights, but calibrated posterior exploration for uncertainty quantification in matrix-valued graph learning.

We report a small-scale PDHMC-like comparison in Appendix~\ref{app:pdhmc}.
Because this baseline uses finite-difference gradients in matrix-log
coordinates, its runtime becomes prohibitive for the larger \(d=5\) scaling
experiments. Hence the main \(m=20,50,100\) comparison focuses on
\ConeMALA{}, Euclidean MALA, and generic RMALA.

\begin{remark}[Step-size tuning and interpretation of diagnostics]
For each graph size \(m\), the step sizes of \ConeMALA{}, Euclidean MALA,
and generic RMALA were selected using short pilot runs over method-specific
logarithmic grids. This protocol was applied uniformly across methods; the
reported step sizes are therefore neither arbitrary nor chosen to favor
\ConeMALA{}. After tuning, all methods were run with the same number of chains
and the same number of iterations per chain. The resulting split-\(\widehat R\)
values are not expected to be identical across samplers. Instead, differences
in \(\widehat R\) and ESS/sec quantify the practical mixing behavior of the
tuned samplers under a common simulation budget.
\end{remark}

\begin{figure}[t]
\centering
\includegraphics[width=\linewidth]{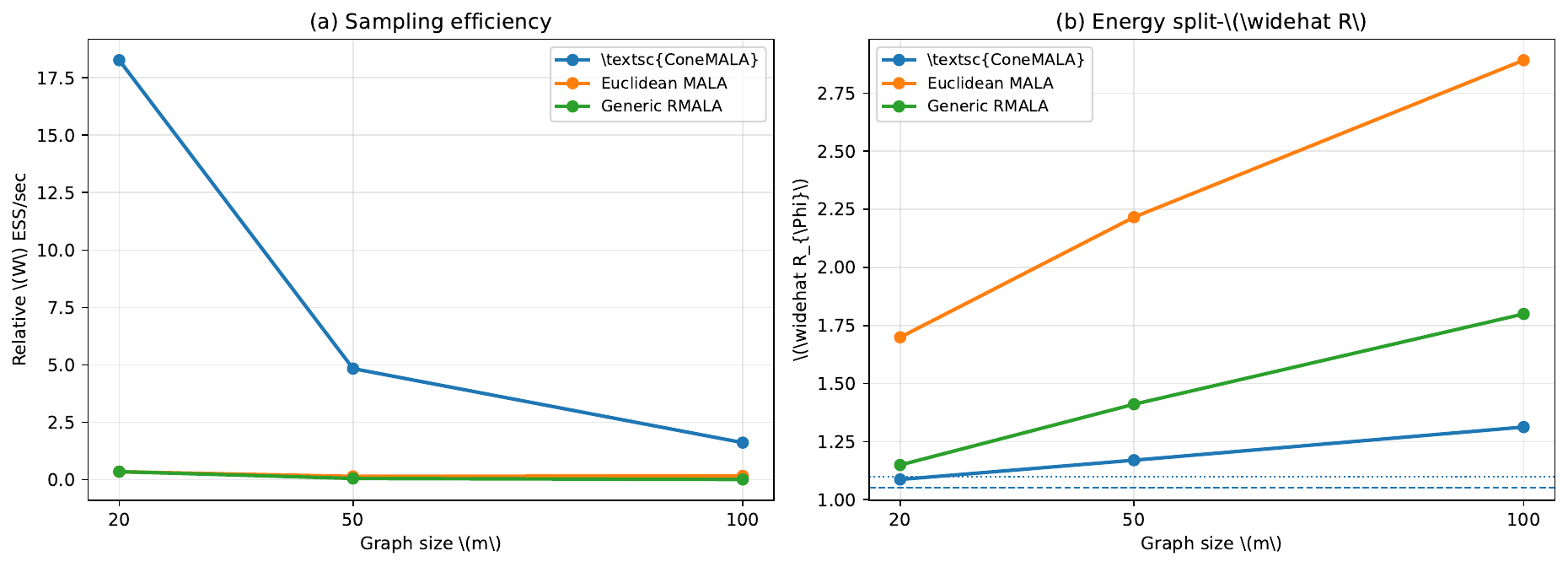}
\caption{
Scaling diagnostics for the \(d=5\) graph posterior. \ConeMALA{}
achieves the largest relative-\(W\) ESS/sec and the smallest energy
split-\(\widehat R_{\Phi}\) across \(m=20,50,100\).
}
\label{fig:d5_scaling}
\end{figure}
Figure~\ref{fig:d5_scaling} summarizes the \(d=5\) scaling behavior across
\(m=20,50,100\). \ConeMALA{} maintains the highest relative-\(W\)
ESS/sec and the most favorable energy split-\(\widehat R_{\Phi}\), while
Euclidean MALA shows poor cross-chain mixing and generic RMALA gives
intermediate but less efficient performance. This supports the fixed-budget
diagnostic comparisons in Tables~\ref{tab:d5_m20_compact}--\ref{tab:d5_m100_compact}.
\begin{table}[H]
\centering
\footnotesize
\caption{
Summary of sampler performance for the \(d=5,m=20\) experiment.
All methods were run with four chains, \(16000\) iterations per chain, and
\(4000\) burn-in samples with \(h_{\rm cone}=8\times 10^{-3}\), \(h_{\rm Euc}=4\times 10^{-5}\), \(h_{\rm RMALA}=3\times 10^{-4}\)}
% Lower is better for relative \(W\) error and test
% NLL; higher is better for ESS/sec. Cone-geometry MALA attains the best
% multi-chain diagnostics and substantially higher sampling efficiency.
% }
\label{tab:d5_m20_compact}
\begin{tabular}{lcccccccc}
\toprule
\textbf{Method}
& \textbf{Mean Acc.}
& \textbf{Rel. \(W\) ESS/sec}
& \textbf{NLL ESS/sec}
& \(\boldsymbol{\widehat R}_{W}\)
& \(\boldsymbol{\widehat R}_{\mathrm{NLL}}\)
& \(\boldsymbol{\widehat R}_{\log\det}\)
& \(\boldsymbol{\widehat R}_{\Phi}\) \\
\midrule
\texttt{cone\_geom\_mala}
& \(0.587\)
& \(\mathbf{18.27}\)
& \(\mathbf{3.26}\)
& \(\mathbf{1.001}\)
& \(\mathbf{1.007}\)
& \(\mathbf{1.003}\)
& \(\mathbf{1.087}\) \\

\texttt{Euclidean\_MALA}
& \(0.190\)
& \(0.35\)
& \(0.37\)
& \(1.879\)
& \(1.636\)
& \(1.813\)
& \(1.699\) \\

\texttt{generic\_RMALA}
& \(0.327\)
& \(0.35\)
& \(0.25\)
& \(1.064\)
& \(1.133\)
& \(1.038\)
& \(1.149\) \\
\bottomrule
\end{tabular}
\end{table}
% Table~\ref{tab:d5_m20_compact} reports multichain diagnostics for the
% \(d=5,m=20\) graph posterior. \ConeMALA achieves stable acceptance and
% near-unity split-\(\widehat R\) values for the principal posterior
% observables. In contrast, Euclidean MALA exhibits poor cross-chain mixing,
% with substantially larger \(\widehat R\) values despite pilot-tuned step
% sizes. 
% Generic RMALA improves over Euclidean MALA but remains less efficient
% than \ConeMALA. The energy observable \(\Phi\) is the slowest diagnostic for
% all methods, but \ConeMALA gives the smallest \(\widehat R_{\Phi}\) and the largest ESS/sec among the three samplers. Overall, cone-geometry MALA provides
% the best combination of reliable convergence diagnostics and effective samples
% per second.

\begin{table}[H]
\centering
\footnotesize
\caption{
Summary of sampler performance for the \(d=5,m=50\) experiment.
All methods were run with four chains, \(16000\) iterations per chain, and
\(4000\) burn-in samples, with \(h_{\rm cone}=6\times 10^{-3}\),
\(h_{\rm Euc}=2\times 10^{-5}\), and \(h_{\rm RMALA}=10^{-4}\).
%Higher is better for ESS/sec, and lower split-\(\widehat R\) values indicate better cross-chain agreement. 
%The same number of chains and iterations was used for all three methods, so the table compares multichain diagnostics under a common simulation budget.
}
\label{tab:d5_m50_compact}
\begin{tabular}{lcccccc}
\toprule
\textbf{Method}
& \textbf{Mean Acc.}
& \textbf{Rel. \(W\) ESS/sec}
& \textbf{NLL ESS/sec}
& \(\boldsymbol{\widehat R}_{W}\)
& \(\boldsymbol{\widehat R}_{\mathrm{NLL}}\)
& \(\boldsymbol{\widehat R}_{\Phi}\) \\
\midrule
\texttt{cone\_geom\_mala}
& \(0.592\)
& \(\mathbf{4.84}\)
& \(\mathbf{1.25}\)
& \(\mathbf{1.001}\)
& \(\mathbf{1.006}\)
& \(\mathbf{1.170}\) \\

\texttt{Euclidean\_MALA}
& \(0.115\)
& \(0.15\)
& \(0.15\)
& \(2.562\)
& \(2.318\)
& \(2.216\) \\

\texttt{generic\_RMALA}
& \(0.397\)
& \(0.06\)
& \(0.05\)
& \(1.501\)
& \(1.600\)
& \(1.411\) \\
\bottomrule
\end{tabular}
\end{table}
\begin{table}[H]
\centering
\footnotesize
\caption{
Summary of sampler performance for the \(d=5,m=100\) experiment.
All methods were run with four chains, \(8000\) iterations per chain, and
\(2000\) burn-in samples, with \(h_{\rm cone}=4\times 10^{-3}\),
\(h_{\rm Euc}=2\times 10^{-5}\), and \(h_{\rm RMALA}=5\times 10^{-5}\).
% Higher is better for ESS/sec, and lower split-\(\widehat R\) values indicate
% better cross-chain agreement. The table reports fixed-budget multichain
% diagnostics.
}
\label{tab:d5_m100_compact}
\begin{tabular}{lcccccc}
\toprule
\textbf{Method}
& \textbf{Mean Acc.}
& \textbf{Rel. \(W\) ESS/sec}
& \textbf{NLL ESS/sec}
& \(\boldsymbol{\widehat R}_{W}\)
& \(\boldsymbol{\widehat R}_{\mathrm{NLL}}\)
& \(\boldsymbol{\widehat R}_{\Phi}\) \\
\midrule
\texttt{cone\_geom\_mala}
& \(0.673\)
& \(\mathbf{1.62}\)
& \(\mathbf{0.30}\)
& \(\mathbf{1.003}\)
& \(\mathbf{1.014}\)
& \(\mathbf{1.313}\) \\

\texttt{Euclidean\_MALA}
& \(0.094\)
& \(0.17\)
& \(0.17\)
& \(2.689\)
& \(2.928\)
& \(2.892\) \\

\texttt{generic\_RMALA}
& \(0.502\)
& \(0.01\)
& \(0.02\)
& \(1.836\)
& \(1.669\)
& \(1.800\) \\
\bottomrule
\end{tabular}
\end{table}
Tables ~\ref{tab:d5_m20_compact}, ~\ref{tab:d5_m50_compact} and~\ref{tab:d5_m100_compact}
compare the three samplers using the same number of chains and iterations for
each method. Across the larger graph sizes, \ConeMALA{} consistently
achieves the largest ESS/sec and the most reliable split-\(\widehat R\)
diagnostics. Euclidean MALA exhibits poor cross-chain mixing, while generic
RMALA improves over Euclidean MALA but remains less efficient than
\ConeMALA{}.

Additional diagnostics for the intrinsic SPD posterior experiment and the
matrix-valued graph posterior experiment are reported in \Cref{appendix}.
These include split-\(\widehat R\), effective sample size per second, empirical
\(\rho\)-proxy values, Monte Carlo standard error comparisons, and predictive
negative log-likelihood diagnostics. Together, these results support the same
conclusion: the cone-induced proposal improves sampling efficiency while remaining consistent with the common target distribution.
\section{Conclusion}
We developed a geometry-aware Langevin sampler for Bayesian inference in
determinantal PSD-weighted graph models. The method is based on the
log-determinant energy
\[
\Phi(W)=-\log\det(L(W)+R),
\]
whose Hessian induces a computable pullback metric on the PSD edge-weight
space. This metric has an affine-invariant interpretation, captures local PSD
perturbation sensitivity, and leads to an implementable Metropolis-adjusted
Langevin sampler.

Across intrinsic SPD posterior and matrix-valued graph Gaussian experiments,
\ConeMALA{} improves sampling efficiency and multichain diagnostics relative
to Euclidean MALA and generic RMALA under common simulation budgets. The
results support log-determinant pullback geometry as a practical tool for
posterior uncertainty quantification in PSD-constrained graph learning. Future
work will focus on scaling the global linear algebra in \(X(W)=L(W)+R\) using
sparse factorizations, iterative solvers, and approximate metric surrogates.

\paragraph{Code availability.}
The reference implementation and scripts for reproducing the numerical
experiments are available at
\url{https://github.com/papridey/k-lorentzian-cone-sampling}.
The repository includes scripts for the SPD posterior experiment, the
graph-posterior experiment, the $d=5$ scaling study, and the
sensitivity-validation figure.
% \section{Conclusion}
% We developed a geometry-aware sampling framework for determinantal PSD-weighted graph models based on the log-determinant energy
% \[
% \Phi(W)=-\log\det(L(W)+R).
% \]
% The model yields explicit directional derivatives and a computable pullback metric on the PSD parameter space. Numerical calibration shows that this metric captures locally important perturbation directions, and the same geometry leads to an implementable affine-invariant MALA scheme.

% In both the intrinsic SPD posterior and the matrix-valued graph Gaussian posterior, the cone-induced sampler compares favorably with Euclidean MALA, generic RMALA, and PDHMC. The principal future challenge is scaling these computations to larger graphs and higher PSD dimension through structure-exploiting linear algebra and approximate metric surrogates.

% \bibliographystyle{plainnat}
% \bibliography{main}

\bibliographystyle{alpha}
\bibliography{main}

\section{Appendix}
\label{appendix}
The proposal density above is the SPD specialization of the exponential-map Jacobian formula in \Cref{thm:exp-jacobian-volg}, while the explicit product formula for $j(S)$ is given in \Cref{prop:jacobian_closed_form}. In the numerical implementation, we use the potential
\[
\Phi(X)
=
\frac{\lambda}{2}\,d_{\mathrm{AI}}(X,X_0)^2
-\beta \log\det X
+\frac{\kappa}{2}\bigl(\tr(X)-1\bigr)^2,
\]
where $d_{\mathrm{AI}}(X,X_0)$ is the affine-invariant distance to a reference matrix $X_0$; the three terms provide, respectively, confinement around $X_0$, log-det repulsion from the boundary, and soft control of the trace direction. The next theorem records the proposal density with respect to Riemannian volume, and the following remark gives the explicit Jacobian formula in the SPD case.
\begin{theorem}[Intrinsic Proposal density] % via the exponential-map Jacobian]
\label{thm:exp-jacobian-volg}
Let $(M, g)$ be a Riemannian manifold with volume measure $\mathrm{vol}_g$,
and fix $x \in M$. Let $\mathrm{Exp}_x \colon \mathcal{U} \subset T_xM
\to \mathcal{V}_x \subset M$ be the exponential map restricted to a normal
neighborhood of $0$, so that $\mathrm{Exp}_x$ is a diffeomorphism from
$\mathcal{U}$ onto $\mathcal{V}_x$. Let $v$ be a tangent-space proposal
with Lebesgue density $\phi_x(v) = \phi(v;\mu(x),\Sigma(x))$ on $T_xM$,
supported in $\mathcal{U}$, and set $Y = \mathrm{Exp}_x(v)$. Write
$j_x(v)$ for the Jacobian of $\mathrm{Exp}_x$ with respect to Euclidean
Lebesgue measure $dv$ on $T_xM$ and Riemannian volume $\mathrm{vol}_g$
on $M$, i.e.\
\begin{equation}
(\mathrm{Exp}_x)^* \mathrm{vol}_g = j_x(v)\, dv.
\end{equation}
Then the law of $Y$ has density with respect to $\mathrm{vol}_g$
\begin{equation}
\label{eq:proposal-density}
q^{\mathrm{vol}_g}(x \to y)
= \frac{\phi\!\left(\mathrm{Log}_x(y);\,\mu(x),\Sigma(x)\right)}
       {j_x\!\left(\mathrm{Log}_x(y)\right)},
\qquad y \in \mathcal{V}_x.
\end{equation}
If the target law is $\pi(dz) = Z^{-1} e^{-\Phi(z)}\,\mathrm{vol}_g(dz)$,
then the Metropolis-Hastings acceptance probability
%\begin{equation*}
\[ 
\alpha(x,y)
= \min\!\left(1,\;
  \frac{e^{-\Phi(y)}\,q^{\mathrm{vol}_g}(y \to x)}
       {e^{-\Phi(x)}\,q^{\mathrm{vol}_g}(x \to y)}
  \right)
  \]
%\end{equation*}
defines a reversible Markov kernel with invariant measure $\pi$, under
the usual measurability and irreducibility assumptions. 
\end{theorem}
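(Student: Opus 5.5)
The plan has two parts: a change-of-variables computation for the proposal density, and the standard Metropolis--Hastings detailed-balance argument carried out with respect to the common reference measure $\mathrm{vol}_g$.

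\textbf{Step 1 (density of $Y$).} Fix $x$ and a bounded measurable test function $h$ on $M$. Since $v$ is supported in $\mathcal U$ and $\mathrm{Exp}_x:\mathcal U\to\mathcal V_x$ is a diffeomorphism with inverse $\mathrm{Log}_x$, we can write
\[
\mathbb E[h(Y)]=\int_{\mathcal U} h(\mathrm{Exp}_x v)\,\phi_x(v)\,dv
=\int_{\mathcal U} h(\mathrm{Exp}_x v)\,\frac{\phi_x(v)}{j_x(v)}\,j_x(v)\,dv .
\]
By the defining identity $(\mathrm{Exp}_x)^*\mathrm{vol}_g=j_x(v)\,dv$ and the change-of-variables formula for pullback measures under a diffeomorphism, this equals
\[
\int_{\mathcal V_x} h(y)\,\frac{\phi_x(\mathrm{Log}_x y)}{j_x(\mathrm{Log}_x y)}\,\mathrm{vol}_g(dy).
\]
This is exactly \eqref{eq:proposal-density}. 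Here $j_x>0$ on $\mathcal U$ because $\mathrm{Exp}_x$ is a local diffeomorphism there, so the division is legitimate. Outside $\mathcal V_x$ the density is zero.

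\textbf{Step 2 (reversibility).} Define the kernel
\[
P(x,dy)=\alpha(x,y)\,q(x\to y)\,\mathrm{vol}_g(dy)+r(x)\,\delta_x(dy),
\]
where $r(x)$ is the rejection probability. Write $p(z)=Z^{-1}e^{-\Phi(z)}$. For $y\neq x$ I will check the pointwise identity
\[
p(x)\,q(x\to y)\,\alpha(x,y)=\min\{p(x)q(x\to y),\,p(y)q(y\to x)\},
\]
which is symmetric in $(x,y)$. This uses the convention $\alpha=0$ whenever the numerator vanishes, and the ratio is taken to be $1$ on the set where both terms are zero. Integrating against $f(x)g(y)$ with respect to $\mathrm{vol}_g\otimes\mathrm{vol}_g$ gives symmetry of the off-diagonal part. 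The diagonal part $r(x)\delta_x$ is trivially symmetric. Together these give $\pi(dx)P(x,dy)=\pi(dy)P(y,dx)$, and integrating over $x$ yields invariance of $\pi$.

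\textbf{Main obstacle.} The only real subtlety is that the forward and reverse densities must both be densities with respect to the \emph{same} reference measure $\mathrm{vol}_g$. That is precisely why Step 1 converts the Lebesgue density on $T_xM$ into a $\mathrm{vol}_g$-density. It also forces the measure-theoretic bookkeeping for the support condition: the ratio requires $y\in\mathcal V_x$ and $x\in\mathcal V_y$. Where this fails, one side is zero and $\alpha$ is set to $0$, and I will verify that detailed balance still holds there because the $\min$ vanishes. Measurability of $(x,y)\mapsto q(x\to y)$ is assumed, as the statement allows. Irreducibility is only needed for convergence, not for reversibility, so it is simply quoted.
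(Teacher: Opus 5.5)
Your proposal is correct and follows the paper's proof: you obtain the $\vol_g$-density of $Y$ by a change of variables through $(\Exp_x)^*\vol_g = j_x\,dv$, then apply Metropolis--Hastings detailed balance with target and proposal both written against $\vol_g$. The only difference is that you verify detailed balance explicitly, via the symmetric $\min$ identity with the zero-density and support-mismatch cases handled, whereas the paper simply invokes the general-state-space Metropolis--Hastings theorem.
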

\begin{proof}
Let \(Q_x\) denote the law of \(Y=\Exp_x(v)\). Since \(Q_x=(\Exp_x)_\#(\phi_x(v)\,dv)\), for every measurable \(A\subset V_x\),
\[
Q_x(A)
=
\int_{\Exp_x^{-1}(A)} \phi_x(v)\,dv.
\]
Because \(\Exp_x:U\to V_x\) is a diffeomorphism, the change-of-variables formula with respect to \(\vol_g\) gives
\[
\int_U f(v)\,dv
=
\int_{V_x} f(\Log_x(y))\,
\frac{1}{j_x(\Log_x(y))}\,\vol_g(dy)
\]
for every nonnegative measurable \(f\) on \(U\). Applying this with
\[
f(v)=\phi_x(v)\mathbf{1}_{\Exp_x^{-1}(A)}(v), \qquad 
\mathbf{1}_{\Exp_x^{-1}(A)}(v)=
\begin{cases}
1, & v\in \Exp_x^{-1}(A),\\[4pt]
0, & v\notin \Exp_x^{-1}(A).
\end{cases}
\]
yields
\[
Q_x(A)
=
\int_A
\frac{\phi_x(\Log_x(y))}{j_x(\Log_x(y))}\,\vol_g(dy).
\]
Therefore the Radon-Nikodym derivative of $Q_x$ with respect to $\vol_g$ is
\[
\frac{dQ_x}{d\vol_g}(y)
=q_{\vol_g}(x\to y) =
\frac{\phi(\Log_x(y);\mu(x),\Sigma(x))}
     {j_x(\Log_x(y))}, \qquad y \in V_x
\]
which proves the proposal-density formula \eqref{eq:proposal-density}. %The logarithmic form is immediate.

%Since both the proposal kernel and the target are written with respect to the same reference measure \(\vol_g\), the final statement is the standard Metropolis-Hastings construction on a general state space.

The logarithmic form follows immediately:
\[
\log q_{\vol_g}(x\to y)
=
\log \varphi(\Log_x(y);\mu(x),\Sigma(x))
-
\log j_x(\Log_x(y)).
\]

Finally, since both the target measure $\pi$ and the proposal kernel are written with
respect to the same reference measure $\vol_g$, the usual Metropolis-Hastings construction
applies on the measurable state space $(M,\mathcal B(M))$. Thus the transition kernel
\[
P(x,dy)
=
\alpha(x,y)\,q_{\vol_g}(x\to y)\,\vol_g(dy)
+
\left(1-\int \alpha(x,z)\,q_{\vol_g}(x\to z)\,\vol_g(dz)\right)\delta_x(dy)
\]
satisfies detailed balance,
\[
\pi(dx)P(x,dy)=\pi(dy)P(y,dx),
\]
and hence $\pi$ is invariant. This is the standard Metropolis-Hastings theorem for general state spaces 
\cite{GirolamiCalderhead2011}, and hence $\pi$ is invariant.
\end{proof}
The explicit SPD Jacobian formula stated below is standard for the exponential map on \(S_{++}^d\) under the metric
$$g_X(U,V)=\tr(X^{-1}UX^{-1}V),$$
see \cite[prop 4.3]{deSurrel2025}, where the formula is proved by working first at the identity and then transporting to a general base point.
We express tangent vectors in the congruence coordinates
$$S=X^{-1/2}UX^{-1/2},
\qquad
U=X^{1/2}SX^{1/2},$$
for which the metric becomes the Frobenius inner product.
\begin{proposition}
\label{prop:jacobian_closed_form}
Let $S=X^{-1/2}UX^{-1/2}$ and let $s_1,\dots,s_d$ be the eigenvalues of $S$. Then the exponential-map Jacobian on $\Spp^d$ is
\[
j(S)=\prod_{i<j}\frac{\sinh((s_i-s_j)/2)}{(s_i-s_j)/2},
\]
with the usual continuous extension when $s_i=s_j$. Hence the proposal density with respect to $\volg$ is
\[
q^{\volg}(X\to Y)=\frac{\varphi(S;M_X,2hI)}{j(S)}.
\]
\end{proposition}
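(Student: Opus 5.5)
The plan is to reduce to the base point \(X=I\) by affine invariance, compute the Jacobian of the matrix exponential there in an eigenbasis of \(S\), and then insert the result into \Cref{thm:exp-jacobian-volg}.

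\emph{Reduction to the identity.} The congruence \(\Gamma_X(Y)=X^{1/2}YX^{1/2}\) is an isometry of \((\Spp^d,g)\), so it preserves \(\volg\). It also intertwines exponential maps:
\[
\Exp_X\bigl(X^{1/2}SX^{1/2}\bigr)=\Gamma_X\bigl(\exp(S)\bigr)=\Gamma_X\bigl(\Exp_I(S)\bigr).
\]
The linear map \(S\mapsto X^{1/2}SX^{1/2}\) carries the Frobenius Lebesgue measure \(dS\) on \(T_I\) to the \(g_X\)-orthonormal Lebesgue measure on \(T_X\). In the congruence coordinates, where \(\varphi(S;M_X,2hI)\) is the stated density, the Jacobian \(j_X\) of \Cref{thm:exp-jacobian-volg} therefore equals the Jacobian of \(\exp:\Sym^d\to\Spp^d\) from \(dS\) to \(\volg\) at \(I\). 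Everything reduces to computing that single function \(j(S)\).

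\emph{Computation at the identity.} Diagonalize \(S=Q\,\mathrm{diag}(s)\,Q^\top\). The differential of the matrix exponential is given by the Daleckii--Krein formula: in the eigenbasis of \(S\),
\[
D\exp_S[H]_{ij}=\frac{e^{s_i}-e^{s_j}}{s_i-s_j}\,\tilde H_{ij},\qquad \tilde H=Q^\top HQ.
\]
Next, measure the image \(\exp(S)+D\exp_S[H]\) in the metric at \(Y=\exp(S)\). Since \(Y^{-1/2}=Q\,\mathrm{diag}(e^{-s/2})Q^\top\), the orthonormal coordinates are \(Y^{-1/2}(D\exp_S[H])Y^{-1/2}\), whose \((i,j)\) entry in the eigenbasis is
\[
\frac{e^{s_i}-e^{s_j}}{(s_i-s_j)\,e^{(s_i+s_j)/2}}\,\tilde H_{ij}=\frac{\sinh\bigl((s_i-s_j)/2\bigr)}{(s_i-s_j)/2}\,\tilde H_{ij}.
\]
So in the orthonormal basis of \(\Sym^d\) adapted to \(Q\), namely diagonal entries together with \(\sqrt2\)-scaled off-diagonal pairs \(i<j\), the differential is diagonal. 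Each diagonal entry \(i=i\) has factor \(1\), and each pair \(i<j\) has factor \(\sinh((s_i-s_j)/2)/((s_i-s_j)/2)\). Conjugation by \(Q\) is Frobenius-orthogonal, so the determinant is the product of these factors, which is exactly the claimed \(j(S)\). All factors are positive, so no absolute value is needed. The case \(s_i=s_j\) follows by continuity, with the factor equal to \(1\). A coincident-eigenvalue set has measure zero, so this does not affect the density.

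\emph{Conclusion and main obstacle.} The map \(\exp:\Sym^d\to\Spp^d\) is a global diffeomorphism, so the normal-neighborhood hypothesis of \Cref{thm:exp-jacobian-volg} holds with \(\mathcal U=T_X\Spp^d\). Moreover \(\Log_X(Y)\) corresponds to \(S=\log(X^{-1/2}YX^{-1/2})\). Substituting \(j\) into \eqref{eq:proposal-density} then gives \(q^{\volg}(X\to Y)=\varphi(S;M_X,2hI)/j(S)\). The main care point is the bookkeeping of reference measures. The Gaussian must be a density with respect to the same Lebesgue measure on \(\Sym^d\) (Frobenius, with the \(\sqrt2\) off-diagonal scaling) that is used in the Jacobian computation; any mismatch produces only a constant factor, which cancels in the Metropolis--Hastings ratio. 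If desired, one can cross-check the result against the Jacobi-field formula \(\prod\sinh(\sqrt{\kappa}\,r)/(\sqrt{\kappa}\,r)\) for symmetric spaces, where the curvature eigenvalues along \(S\) are \(-(s_i-s_j)^2/4\), as in \cite[prop 4.3]{deSurrel2025}.
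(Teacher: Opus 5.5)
Your proof is correct and follows the paper's outline: write $\Exp_X(X^{1/2}SX^{1/2})=X^{1/2}\exp(S)X^{1/2}$, reduce the Jacobian to the base point $I$, and substitute into \Cref{thm:exp-jacobian-volg}.

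The difference is in how the two ingredients are obtained. The paper imports both from \cite[prop 4.3]{deSurrel2025}: the identity-point formula $2^{d(d-1)/2}\prod_{i<j}\sinh((s_i-s_j)/2)/(s_i-s_j)$ and the transport rule $J_X(U)=J_{\mathrm{Id}}(X^{-1/2}UX^{-1/2})$. You derive both. The transport rule follows because the congruence $\Gamma_X$ is an isometry that preserves $\volg$, and $S\mapsto X^{1/2}SX^{1/2}$ is a linear isometry from the Frobenius structure to $g_X$. The identity Jacobian comes from the Daleckii--Krein divided-difference formula: after whitening by $Y^{-1/2}$, the differential is diagonal in the $Q$-adapted Frobenius-orthonormal basis, with entries $1$ and $\sinh((s_i-s_j)/2)/((s_i-s_j)/2)$.

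The paper's route is shorter and rests on a published result. Yours is self-contained and buys three things.
\begin{itemize}
\item It makes explicit which Lebesgue measure on $\Sym^d$ the Jacobian is taken against: Frobenius, with $\sqrt{2}$-scaled off-diagonals. This shows that the factor $2^{d(d-1)/2}$ in the cited formula is absorbed exactly into the $d(d-1)/2$ pairwise denominators, with no stray constant.
\item It verifies a hypothesis of \Cref{thm:exp-jacobian-volg} that the paper leaves implicit. Because $\exp$ is a global diffeomorphism, the Gaussian, which has full support on $T_X$, is supported in a normal neighborhood.
\item It handles coincident eigenvalues directly, with no measure-zero argument. The Daleckii--Krein formula uses the divided difference $e^{s_i}$ when $s_i=s_j$, and the whitening multiplies this by $e^{-s_i}$, giving exactly the continuous-extension value $1$.
\end{itemize}
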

\label{prop:spd-exp-jacobian}
\begin{proof}
By the formula for the exponential map on \(S_{++}^d\) under the metric
\[
g_X(U,V)=\tr(X^{-1}UX^{-1}V),
\]
one has
\[
\Exp_X(U)=X^{1/2}\exp\!\bigl(X^{-1/2}UX^{-1/2}\bigr)X^{1/2}.
\]
Writing
\[
S=X^{-1/2}UX^{-1/2},
\qquad
U=X^{1/2}SX^{1/2},
\]
gives
\[
\Exp_X(U)=X^{1/2}\exp(S)X^{1/2}.
\]

For the Jacobian, de Surrel et al. prove at the identity that
\[
J_{\mathrm{Id}}(S)
=
2^{d(d-1)/2}\prod_{i<j}\frac{\sinh((s_i-s_j)/2)}{s_i-s_j},
\]
and then extend this to a general base point by
\[
J_X(U)=J_{\mathrm{Id}}(X^{-1/2}UX^{-1/2}).
\]
Since \(S=X^{-1/2}UX^{-1/2}\), this yields
\[
J_X(U)=J_{\mathrm{Id}}(S)
=
\prod_{i<j}\frac{\sinh((s_i-s_j)/2)}{(s_i-s_j)/2}
=:j(S).
\]
Therefore
\[
(\Exp_X)^*\vol_g=j(S)\,dS.
\]
The proposal-density formula then follows from \Cref{thm:exp-jacobian-volg}.
\end{proof}
Thus, \Cref{thm:exp-jacobian-volg} gives the intrinsic Metropolis-Hastings acceptance rule, and Proposition 
\Cref{prop:jacobian_closed_form} makes that rule explicit on $\S_{++}^d$. Together they yield a
rigorous and implementable geometry-aware sampler in the SPD setting.
%\appendix
\renewcommand{\thealgorithm}{A.\arabic{algorithm}}
\setcounter{algorithm}{0}

\begin{algorithm}[t]
\caption{Affine-invariant geometry-aware Metropolis-adjusted Langevin algorithm (MALA) on $\S_{++}^d$}
\label{alg:mmala-manifold}
\begin{algorithmic}[1]
\Require Potential $\Phi:\S_{++}^d\to\R$, step size $h>0$, initial state $X_0\in \S_{++}^d$, number of steps $N$.
\Ensure Samples $\{X_k\}_{k=0}^N$ targeting $\pi(dX)\propto e^{-\Phi(X)}\,\vol_g(dX)$.
\For{$k=0,1,\dots,N-1$}
    \State Set $X\gets X_k$.
    \State Compute the Riemannian gradient $\nabla_g\Phi(X)=X(\nabla\Phi(X))X$.
    \State Form the transformed drift matrix
    \[
    M_X:=-h\,X^{-1/2}\nabla_g\Phi(X)X^{-1/2}.
    \]
    \State Draw $Z\sim N(0,I)$ on $\S^d$ and set
    \[
    S=M_X+\sqrt{2h}\,Z.
    \]
    \State Propose
    \[
    Y=X^{1/2}\exp(S)X^{1/2}.
    \]
    \State Compute the reverse transformed increment
    \[
    T=\log\!\bigl(Y^{-1/2}XY^{-1/2}\bigr).
    \]
    \State Evaluate the forward proposal density
    \[
    q^{\vol_g}(X\to Y)=\frac{\phi(S;M_X,2hI)}{j(S)},
    \]
    where $\phi(\,\cdot\,;M,2hI)$ is the Gaussian density on $\S^d$ with mean $M$ and covariance $2hI$, and
    \[
    j(S)=\prod_{i<j}\frac{\sinh\!\bigl((s_i-s_j)/2\bigr)}{(s_i-s_j)/2},
    \]
    with $s_1,\dots,s_d$ the eigenvalues of $S$.
    \State Compute the reverse proposal density analogously 
    \[
    M_Y:=-h\,Y^{-1/2}\nabla_g\Phi(Y)Y^{-1/2},
    \qquad
    q^{\vol_g}(Y\to X)=\frac{\phi(T;M_Y,2hI)}{j(T)}.
    \]
    \State Accept with probability
    \[
    \alpha(X,Y)=
    \min\left\{1,\,
    \frac{e^{-\Phi(Y)}\,q^{\vol_g}(Y\to X)}
         {e^{-\Phi(X)}\,q^{\vol_g}(X\to Y)}
    \right\}.
    \]
    \State If accepted, set $X_{k+1}\gets Y$; otherwise set $X_{k+1}\gets X$.
\EndFor
\end{algorithmic}
\end{algorithm}
\Cref{alg:mmala-manifold} summarizes the resulting geometry-aware Metropolis-adjusted Langevin method on \(S_{++}^d\). In \Cref{sec:experiments}, the summary statistics in \Cref{tab:exp1_summary_csv} and the ECDFs in \Cref{fig:ecdf_exp1} are generated using the \Cref{alg:mmala-manifold} with SPD dimension \(d=5\). 
The proposal density above is the SPD specialization of the exponential-map Jacobian formula in \Cref{thm:exp-jacobian-volg}, while the explicit product formula for $j(S)$ is given in \Cref{prop:jacobian_closed_form}. In the numerical implementation, we use the potential
\[
\Phi(X)
=
\frac{\lambda}{2}\,d_{\mathrm{AI}}(X,X_0)^2
-\beta \log\det X
+\frac{\kappa}{2}\bigl(\tr(X)-1\bigr)^2,
\]
where $d_{\mathrm{AI}}(X,X_0)$ is the affine-invariant distance to a reference matrix $X_0$; the three terms provide, respectively, confinement around $X_0$, log-det repulsion from the boundary, and soft control of the trace direction. For reproducibility, the proposal scales are $h_{\mathrm{cone}}=8\times 10^{-3},\qquad h_{\mathrm{Euc}}=8\times 10^{-4},\qquad h_{\mathrm{RMALA}}=6\times 10^{-3},$
while PDHMC uses $\varepsilon=5\times10^{-2}$ and $L=6$ leapfrog steps. These values were fixed after short pilot runs.
\begin{figure}[H]
\centering
\includegraphics[width=\linewidth]{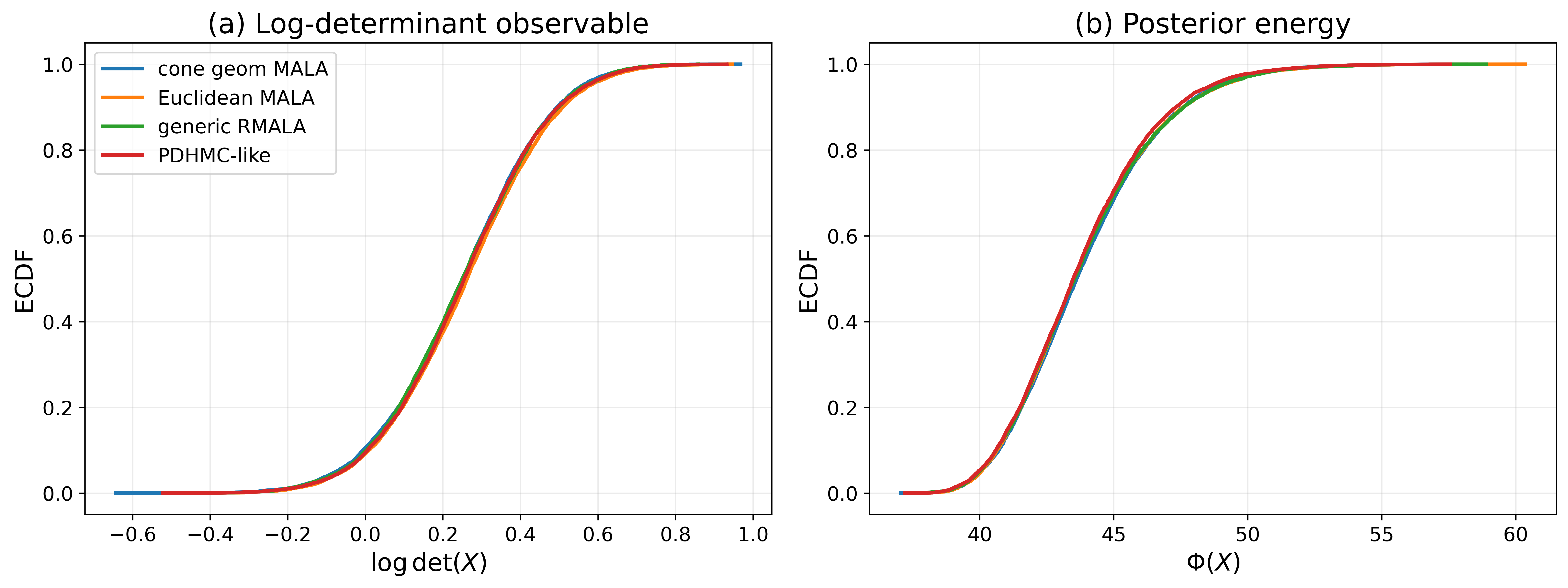}
\caption{Cross-method ECDF comparison for representative observables in the intrinsic SPD posterior experiment. Panel (a) shows the ECDFs of the log-determinant observable \(\log\det(X)\), and panel (b) shows the ECDFs of the posterior energy \(\Phi(X)\), for cone\_geom MALA, Euclidean MALA, generic RMALA, and PDHMC-like.}
\label{fig:ecdf_exp1}
\end{figure}
\Cref{fig:ecdf_exp1} compares the empirical cumulative distribution functions of \(\log\det(X)\) and \(\Phi(X)\) across the four samplers. The strong overlap of the curves indicates agreement of the sampled one-dimensional marginals for these representative observables. Together with the efficiency diagnostics, this
supports the interpretation that the gains of \ConeMALA \ come from faster exploration of the same target distribution, rather than from sampling a materially different posterior law.
\subsection{Small-scale comparison with PDHMC-like baseline}
\label{app:pdhmc}
\begin{table}[H]
\centering
\footnotesize
\caption{
Appendix comparison including the PDHMC-like baseline for the \(d=5,m=20\)
experiment. The MALA-type samplers were run with four chains, \(16000\)
iterations per chain, and \(4000\) burn-in samples. The PDHMC-like baseline
was run with a shorter trajectory budget because its finite-difference
gradient computation in matrix-log coordinates is substantially more
expensive. Runtime is reported in seconds per chain. Lower is better for
relative \(W\) error, while higher is better for ESS/sec.
}
\label{tab:appendix_pdhmc_d5_m20}
\begin{tabular}{lccccc}
\toprule
\textbf{Method}
& \textbf{Runtime}
& \textbf{Mean Acc.}
& \textbf{Rel. \(W\) err.}
& \(\boldsymbol{\widehat R}_{W}\)
& \textbf{ESS/sec} \\
\midrule
\ConeMALA{}
& \(23.09\)
& \(0.152\)
& \(\mathbf{0.5277}\)
& \(1.003\)
& \(\mathbf{7.15}\) \\

\texttt{Euclidean\_MALA}
& \(3.46\)
& \(0.004\)
& \(0.6456\)
& \(4.424\)
& \(1.16\) \\

\texttt{generic\_RMALA}
& \(24.58\)
& \(0.074\)
& \(0.5298\)
& \(1.252\)
& \(0.41\) \\

\texttt{PDHMC\_like}
& \(7591.11\)
& \(0.992\)
& \(0.5291\)
& \(1.008\)
& \(0.013\) \\
\bottomrule
\end{tabular}
\end{table}
Table~\ref{tab:appendix_pdhmc_d5_m20} reports a small-scale comparison that
includes the PDHMC-like baseline. Although PDHMC-like attains reliable
\(\widehat R_W\), it requires substantially larger wall-clock time because
the implementation uses finite-difference gradients in matrix-log coordinates.
For \(d=5,m=20\), the sampled product-cone dimension is
\(m d(d+1)/2=300\), while each target evaluation involves the lifted
precision matrix \(Q(W)\in\mathbb S_{++}^{100}\). Thus each finite-difference
gradient evaluation requires many expensive posterior evaluations. This
computational cost makes the PDHMC-like baseline impractical for the larger
\(m=50,100\) scaling experiments. This is why the main scaling experiments focus on the three MALA-type samplers.
\subsection{Additional diagnostics for the intrinsic SPD posterior experiment}
\label{subsec:spd-additional-diagnostics}

The main efficiency results for the intrinsic SPD posterior experiment are
reported in Table~\ref{tab:exp1_summary_csv}.  We add two complementary
diagnostics.  First, Table~\ref{tab:rho-proxy} reports an empirical
observable-wise \(\rho\)-proxy, motivated by the Poincar\'e inequality, under the
common affine-invariant geometry.  Since all four samplers target the same
intrinsic Gibbs law on \(S_{++}^d\), the proxy is computed using the same
Riemannian metric
\[
g_X(U,V)=\operatorname{tr}(X^{-1}UX^{-1}V).
\]
For a smooth observable \(h:S_{++}^d\to\mathbb{R}\), define
\[
\widehat\rho(h)
:=
\frac{
N^{-1}\sum_{i=1}^N
\|\nabla_g h(X^{(i)})\|_g^2
}{
\widehat{\operatorname{Var}}_{\rm samples}(h)
},
\qquad
\|\nabla_g h\|_g^2
:=
g_X(\nabla_g h,\nabla_g h).
\]
We evaluate this proxy for
\[
h\in
\left\{
\log\det(X),\;
\tfrac12 d_g(X,X_0)^2,\;
\operatorname{tr}(X),\;
\lambda_{\min}(X),\;
\operatorname{tr}(X^2)
\right\},
\]
and report
\[
\widehat\rho_{\min}:=\min_h \widehat\rho(h)
\]
as a conservative summary.  These values are not certified spectral-gap
estimates; rather, they provide an intrinsic-scale diagnostic of
variance-to-gradient-energy behavior under the common geometry.

Second, Table~\ref{tab:mcse-zscore} reports Monte Carlo Standard Error (MCSE)-normalized cross-method
differences in posterior averages, using \ConeMALA{} as the reference sampler.
For each observable \(h\), we compute
\[
z_m(h)=
\frac{\bar h_{\rm cone}-\bar h_m}
{\sqrt{
\operatorname{MCSE}_{\rm cone}(h)^2+
\operatorname{MCSE}_{m}(h)^2}},
\qquad
m\in\{\texttt{Euclidean},\texttt{generic\_RMALA},\texttt{PDHMC}\}.
\]
Small values of \(|z_m(h)|\) indicate agreement of posterior averages within Monte Carlo error.
\begin{table}[H]
\centering
\small
\caption{Empirical \(\rho\)-proxy values for the intrinsic SPD posterior experiment. The final column reports the minimum proxy across the tested observables.}
\label{tab:rho-proxy}
\begin{tabular}{lcccccc}
\toprule
Method 
& \(\widehat\rho_{\log\det}\)
& \(\widehat\rho_{\frac12 d_g^2}\)
& \(\widehat\rho_{\operatorname{tr}(X)}\)
& \(\widehat\rho_{\lambda_{\min}(X)}\)
& \(\widehat\rho_{\operatorname{tr}(X^2)}\)
& \(\widehat\rho_{\min}\)\\
\midrule
\ConeMALA{}           & 102.034 & 88.142 & 44.392 & 44.095 & 44.541 & 42.496\\
\texttt{Euclidean}    &  99.337 & 88.729 & 42.859 & 42.245 & 52.053 & 41.618\\
\texttt{generic\_RMALA} & 101.763 & 87.448 & 44.909 & 44.642 & 54.973 & 44.252\\
\texttt{PDHMC}        & 102.243 & 86.489 & 43.495 & 43.419 & 53.970 & 42.605\\
\bottomrule
\end{tabular}
\end{table}
\begin{table}[H]
\centering
\small
\caption{MCSE and \(z\)-score comparison for the intrinsic SPD posterior
experiment, using \ConeMALA{} as the reference.}
\label{tab:mcse-zscore}
\begin{tabular}{lccccccc}
\toprule
Observable
& \(\bar h_{\rm cone}\)
& \(\mathrm{MCSE}_{\rm cone}\)
& \(\bar h_{\rm Euc}\)
& \(z_{\rm Euc}\)
& \(\bar h_{\rm RMALA}\)
& \(z_{\rm RMALA}\)
& \(z_{\rm PDHMC}\)\\
\midrule
\(\log\det(X)\)      & 0.1381 & 0.000203 & 0.1365 & 0.397 & 0.1415 & -1.134 & 0.059\\
\(\lambda_{\min}(X)\)& 0.6871 & 0.001151 & 0.6885 & -0.487 & 0.6911 & -2.223 & -0.666\\
\(d_g(X,X_0)^2\)     & 0.1276 & 0.000857 & 0.1299 & -1.047 & 0.1268 & 0.596 & 0.783\\
\(\operatorname{tr}(X)\) & 3.4086 & 0.001942 & 3.4147 & 1.188 & 3.2009 & 0.279 & 1.177\\
\(\Phi(X)\)          & 18.4607 & 0.019323 & 18.5054 & -0.909 & 18.4469 & 0.559 & 0.758\\
\bottomrule
\end{tabular}
\end{table}

\end{document}